
\documentclass[letterpaper,10 pt,conference]{ieeeconf}  

\IEEEoverridecommandlockouts                              
\overrideIEEEmargins

\usepackage{amsmath,amsthm,amssymb}
\usepackage{amsfonts}
\usepackage{dsfont}
\usepackage{graphicx}
\usepackage{color}
\usepackage{cite}
\usepackage{subcaption}
\usepackage{flushend}
\usepackage{graphicx}
\usepackage{algorithm,algorithmic}
\usepackage{epstopdf}
\usepackage{bm}
\usepackage{stmaryrd}
\newtheorem{theorem}{Theorem}

\newtheorem{lemma}{Lemma}
\newtheorem{corollary}{Corollary}
\newtheorem{definition}{Definition}

\DeclareMathOperator*{\argmin}{arg\,min}




\title{\LARGE \bf
A Convex Approach to Frisch-Kalman Problem
}

\author{Di Zhao, Anders Rantzer, and Li Qiu
\thanks{*This work was supported in part by the Research Grants Council of Hong Kong Special Administrative Region, China, under Theme-based Research Scheme T23-701/14N, and by the Swedish Research Council, Sweden.}
\thanks{D. Zhao and L. Qiu are with the Department of Electronic \& Computer Engineering, The Hong Kong University of Science and Technology, Clear Water Bay, Kowloon, Hong Kong, China. Emails: {\tt\small dzhaoaa@ust.hk}, {\tt\small eeqiu@ust.hk}}
\thanks{A. Rantzer is with the Department of Automatic Control and the Excellence Center ELLIIT, Lund University, Lund, Sweden. Email:~{\tt\small rantzer@control.lth.se}
}
}
\graphicspath{{./fig/}}

\begin{document}

\maketitle
\thispagestyle{empty}
\pagestyle{empty}

\begin{abstract}
This paper proposes a convex approach to the Frisch-Kalman problem that identifies the linear relations among variables from noisy observations.  The problem was proposed by Ragnar Frisch in 1930s, and was promoted and further developed by Rudolf Kalman later in 1980s. It is essentially a rank minimization problem with convex constraints. Regarding this problem, analytical results and heuristic methods have been pursued over a half century. The proposed convex method in this paper is shown to be accurate and demonstrated to outperform several commonly adopted heuristics when the noise components are \textcolor{black}{relatively small compared with the underlying data}.
\end{abstract}

\section{Introduction}
The identification from noisy data has become an important problem of statistics and, via applications, of econometrics, biometrics, psychometrics and so on. Among various problems with different models on the data and noise, the Frisch-Kalman problem (scheme) \cite{frisch1934statistical,kalman1982system,Kalman1985RankProblem}, \textcolor{black}{which is rooted in the work of Charles Spearman \cite{spearman1904} in 1904,} has attracted much attention and been investigated since 1930s \cite{frisch1934statistical,ledermann1940,kalman1982system,Kalman1985RankProblem,shapiro1982rank,anderson1987dynamic,soderstrom2007errors,ning2015linear}.

Given a finite family of $n$ (random) variables $\{\omega_1,\omega_2,\dots,\omega_n\}$ that are linearly dependent, we call them the true or underlying data, and in general, we have no direct access to their exact values. Instead, we can measure or observe their values in a noisy environment. The observed data $\{{x}_1,{x}_2,\dots,{x}_n\}$ are corrupted by noise variables $\{\delta_1,\delta_2,\dots,\delta_n\}$ additively, i.e.,
$$x_i=\omega_i+\delta_i,~i=1,2,\dots,n.$$
 One may ask naturally: can we identify the linear relations among the true data from the observed (noisy) data samples? For this purpose, what else do we need to know about the data and noise? A well-established answer to the problems is given by the Frisch-Kalman scheme.

 Denote by $\Sigma$ the covariance matrix of the observed data $\{x_i\}$, which may be obtained from repeated experiments and measurements. Denote by $\Omega$ and $\Delta$ the covariance matrices of the true data $\{\omega_i\}$ and noise $\{\delta_i\}$, respectively. The key assumption in the Frisch-Kalman scheme is that the noise components are mutually uncorrelated and independent from the true data, in which case the following decomposition holds:
 $$\Sigma=\Omega+\Delta,$$
 and $\Delta$ is nonnegative and diagonal. \textcolor{black}{Such a decomposition is called the factor analytic decomposition \cite{comrey1992first,shapiro1982rank} as is used in a statistical method --- the factor analysis.} The Frisch-Kalman scheme suggests one way to identify the linear relations via the minimization of the rank of $\Sigma-\Delta$ over all possible \textcolor{black}{noise covariance matrices $\Delta$}. \textcolor{black}{Regarding this scheme, a particularly important problem, which aims at finding the exact class of the observed covariance matrices $\Sigma$ such that the maximum corank of $\Sigma-\Delta$ over all $\Delta$ is one, has been investigated since 1940s \cite{reiersol1941,Kalman1985RankProblem,kalman1982system,shapiro1982rank,anderson1987dynamic,ning2015linear}.}

The Frisch-Kalman problem is essentially a rank minimization problem with convex constraints. It is closely related to the low-rank matrix completion problem \cite{Candes2010MatrixCompletion,Recht2010,jain2013low}, where one wishes to complete a partially known matrix so that its rank is as small as possible. The nuclear norm minimization \cite{Recht2010} has been pursued as a suitable heuristic for general rank minimization problems. In terms of the Frisch-Kalman problem, the nuclear norm heuristic reduces to the well-studied minimum trace factor analysis \cite{ledermann1940,shapiro1982rank,ning2015linear,saunderson2012diagonal}. As generalizations to the nuclear norm, a family of low-rank inducing norms, \textcolor{black}{called the $r*$-norms \cite{grussler2018low,grussler_pontus2018low} or spectral $r$-support norms \cite{mcdonald2016new},} have been recently proposed, which improve the performance of the nuclear norm heuristic for rank minimization problems. In addition to the low-rank inducing norms, other surrogates have been studied for the rank function, for example, the logarithm of the determinant (log-det) \cite{fazel2003log}.

In this paper, we propose a convex approach to the Frisch-Kalman problem by first reformulating the problem into a norm minimization problem with a rank constraint, then relaxing it into a convex problem that is essentially a semi-definite programming (SDP) \cite{boyd2004convex}. Conditions on the tight relaxation are developed and demonstrated.
The reformulated Frisch-Kalman problem additionally penalizes the variances of noise components, which is motivated by the application scenarios when the noise are well-bounded with respect to the underlying data. For example, population census and mapping in developed countries \cite{Andrew2011PopulationCensus}, channel estimations in slow fading channels \cite{tse2005fundamentals}, long-term global surface temperature measure \cite{Hansen14288}, and so on. Comparisons with the existing heuristic methods, including the nuclear norm minimization \cite{Recht2010}, the $r*$-norm minimization \cite{grussler2018low} and the log-det heuristic \cite{fazel2003log}, show that the proposed method has high success rates and strictly outperforms the others when the noise components are well bounded with respect to the underlying data.

The rest of the paper is organized as follows. In Section~\ref{sec:pre}, basic notation and preliminary results are introduced. In Section~\ref{sec:proposed_method}, the main algorithm is developed. In Section~\ref{sec:tight_relax}, the conditions on the tight relaxation are obtained. In Section~\ref{sec:compare_Heuristic}, comparisons with the existing heuristic methods are shown via simulations. Finally, in Section~\ref{sec:conclusion}, the study is concluded and future research directions are introduced.

\section{Preliminaries}\label{sec:pre}
\subsection{Notation}
Let $\mathbb{R}$ be the real field,  and $\mathbb{R}^n$ be the linear space of $n$-dimensional vectors over $\mathbb{R}$. For $x \in \mathbb{R}^n$, its Euclidean norm is denoted by $\|x\|$.

For matrix $A\in \mathbb{R}^{m \times n}$, its element in the $i$th row and $j$th column is denoted by $[A]_{ij}$, $i=1,2,\dots,m$, $j=1,2,\dots,n$, its transpose is by $A^T$, its range is by \textcolor{black}{$$\mathcal{R}(A):=\{y\in\mathbb{R}^m|~ y=A x~\text{for some}~x\in\mathbb{R}^n\},$$
its kernel is by}
$$\textcolor{black}{\mathcal{K}(A):=\{x\in\mathbb{R}^n |~ A x=0 \},}$$
and its $k$th singular value is by $\sigma_k(A)$, $k=1,2,\dots  ,l$, in a nonincreasing order, where $l=\min\{m,n\}$. The largest and smallest singular values are specially denoted by $\bar{\sigma}(A):=\sigma_1(A)$ and $\underline{\sigma}(A):=\sigma_{l}(A)$, respectively. The operator norm (spectral norm) and the Frobenius norm of $A$ are respectively denoted by $$\|A\|:=\bar{\sigma}(A)~\text{and}~\|A\|_F:=\sqrt{\sum_{k=1}^{l}\sigma_k^2(A)}.$$
The $r$-norm \cite{grussler2018low} of $A$, $r=1,2,\dots,l$, is defined via
$$\|A\|_r:=\sqrt{\sum_{i=1}^r \sigma^2_{i}(A)}.$$
Clearly, $\|A\|_F=\|A\|_l.$ Denote its singular value decomposition (SVD) as
$$A=USV^T=\sum_{i=1}^{l}\sigma_i(A)u_iv_i^T,$$
where $U,V$ are unitary. For $A,B\in \mathbb{R}^{m \times n}$, their inner product is defined via
$$\langle A,B\rangle:=\text{tr}(A^TB).$$
For $X\in\mathbb{R}^{n\times n}$, the diagonal matrix that keeps the diagonal terms of $X$ is denoted by $\text{diag}(X)$. For $x\in\mathbb{R}^n$, the diagonal matrix with its $i$th diagonal term given by $x_i$ is denoted by $\text{diag}^*(x)$.

Some frequently used special sets of matrices are as follows.
\begin{itemize}
	\item Denote by $\mathbb{S}^n$ the set of all symmetric matrices in $\mathbb{R}^{n\times n}$.
	\item Denote by $\mathbb{S}^n_0$ the set of symmetric matrices in $\mathbb{R}^{n\times n}$ with zero diagonals.
	\item Denote by $\mathbb{S}^n_+$ ($\mathbb{S}^n_{++}$, resp.) the set of all positive semi-definite (definite, resp.) matrices in $\mathbb{R}^{n\times n}$.
	\item Denote by $\mathbb{D}^n$ the set of all diagonal matrices in $\mathbb{R}^{n\times n}$.
    \item Denote by $\mathbb{D}_+^n$ the set of all nonnegative diagonal matrices in $\mathbb{R}^{n\times n}$.
\end{itemize}
\subsection{Standard Low-Rank Approximation}\label{subsec:lowrank_approx}
Let $A\in \mathbb{R}^{m \times n}$ and $l=\min\{m,n\}$. Consider the following standard rank approximation problem:
\begin{align}\label{eq:standard_problem}
\min_{{B}}\left\{\left\|A-{B}\right\|_F|~\text{rank}({B})\leq r\right\},
\end{align}
for $r=1,2,\dots,l$. Based on the Schmidt-Mirsky theorem \cite[Chapter~IV]{stewart1990matrix}, all solutions to problem~(\ref{eq:standard_problem}) is given by
$$\text{svd}_r(A)\hspace*{-2pt}:=\hspace*{-3pt}\left\{\sum_{i=1}^r\sigma_i(A)u_iv_i^T\Bigg|~A=\sum_{i=1}^{l}\sigma_i(A)u_iv_i^T~\text{is SVD}\hspace*{-2pt}\right\},$$
which is called the set of all standard $r$th order SVD-approximation to $A$. Clearly, for every ${B}\in\text{svd}_r(A)$, it holds that
\begin{align}\label{eq:rank_Ahat}
\text{rank}({B})=r.
\end{align}
When $\sigma_r(A)>\sigma_{r+1}(A)$, $\text{svd}_r(A)$ is a singleton and its only element is denoted by
$$[A]_r:=\sum_{i=1}^r\sigma_i(A)u_iv_i^T.$$
The optimal value to the problem is given by
\begin{multline*}
\min_{{B}}\left\{\left\|A-{B}\right\|_F|\ \text{rank}({B})\leq r\right\}=\\
\left\|\begin{bmatrix}\sigma_{r+1}(A)&\cdots&\sigma_{l}(A)\end{bmatrix}\right\|=\sqrt{\|A\|^2_F-\|A\|_r^2}.
\end{multline*}
\subsection{Frisch-Kalman Problem}
The Frisch-Kalman problem is defined via the following optimization \cite{frisch1934statistical,kalman1982system,Kalman1985RankProblem,ning2015linear}.
\begin{definition}\label{def:FrischKalman_problem}
	Given $\Sigma\in\mathbb{S}^n_{++}$, determine
	\begin{equation}\label{eq:FrischKalman}
    \begin{aligned}
	\text{\rm mr} (\Sigma)=\min_{\Omega,\Delta}\{\text{\rm rank}(\Omega)|~\Sigma = \Omega+\Delta,\\
	\Omega\in\mathbb{S}^n_{+}, {\Delta}\in\mathbb{D}_+^n\}.
	\end{aligned}
	\end{equation}
\end{definition}
A matrix $\Delta$ is said to be feasible to problem~(\ref{eq:FrischKalman}), if $\Delta$ is diagonal and $\Sigma\geq \Delta \geq 0$. A trivial upper bound to the problem is given by $\text{mr}(\Sigma)\leq n-1$, which can be obtained by selecting a feasible $\Delta=\underline{\sigma}(\Sigma)I$. The Frisch-Kalman problem is, in general, non-convex, and many heuristic convex approaches have been proposed and investigated \cite{Recht2010,jain2013low,shapiro1982rank,ning2015linear}.

\section{Proposed Convex Approach}\label{sec:proposed_method}
In this section, we develop a convex approach to solving the Frisch-Kalman problem, and further apply the algorithm to a variant of the Frisch-Kalman problem, called the Shapiro problem \cite{shapiro1982rank}.
\subsection{Reformulation and Relaxation}
Consider the factor analytic decomposition $\Sigma=\Omega+\Delta$. In the context of Frisch-Kalman scheme, $\Omega\in\mathbb{S}_+^n$ is the unknown covariance matrix of some linearly dependent true data variables, and hence it is expected to have a low rank. The matrix $\Delta$ is the covariance matrix of an uncorrelated noise vector, and hence it must be nonnegative diagonal. Finally, $\Sigma\in\mathbb{S}_{++}^n$, as the sum of $\Omega$ and $\Delta$, is the covariance matrix of the noisy data under the assumption that the data and noise are independent. In many practical situations, the variances of the noise may be much smaller than those of the true data. To take advantage of the additional preknowledge, we may penalize the ``size of noise'' as in the following reformulation of Frisch-Kalman problem.

Given an integer $r\in[1,n]$, we reformulate the Frisch-Kalman problem into the following norm minimization problem with a rank constraint:
\begin{equation}\label{eq:reformulated_FK}
\begin{aligned}
\min_{\Omega} \{\|\Sigma-\Omega\|_F^2|~\text{rank}(\Omega)\leq r,~ \Sigma\geq \Omega\geq 0,\\~\Sigma-\Omega\in\mathbb{D}^n\}.
\end{aligned}
\end{equation}
Here, the object function is simply the sum of squares of all the entries in the diagonal matrix $\Sigma-\Omega$. The rank function is moved from the object function in (\ref{eq:FrischKalman}) to the constraints in (\ref{eq:reformulated_FK}). If this problem is feasible, then we obtain immediately $\text{mr}(\Sigma)\leq r$. In other words, we can search for $\text{mr}(\Sigma)$ via solving a sequence of feasibility problems of (\ref{eq:reformulated_FK}) with different levels of $r\in[1,n]$. However, the reformulated problem (\ref{eq:reformulated_FK}) is still non-convex. To proceed, we develop some further relaxations in the following.

We introduce a symmetric matrix with zero diagonals, namely, $\Lambda\in \mathbb{S}_0^n$, as the dual variable. Based on the reformulated Frisch-Kalman problem (\ref{eq:reformulated_FK}), we have the following series of equalities and inequalities:
\begin{align}\label{eq:relaxed_FK}
&\min_{\Omega} \{\|\Sigma-\Omega\|_F^2|~\text{rank}(\Omega)\leq r,\Sigma\geq \Omega\geq 0,\Sigma-\Omega\in\mathbb{D}^n\} \nonumber\\
&= \min_{\Omega}\max_\Lambda \{\|\Sigma-\Omega\|_F^2 + 2 \langle \Lambda,\Sigma-\Omega\rangle|\nonumber\\
&\hspace{90pt}\text{rank}(\Omega)\leq r,~\Sigma\geq \Omega\geq 0, \Lambda\in\mathbb{S}_0^n\} \nonumber\\
&\geq \max_\Lambda\min_{\Omega} \{\|\Sigma-\Omega\|_F^2 + 2 \langle \Lambda,\Sigma-\Omega\rangle|\nonumber\\
&\hspace{90pt}\text{rank}(\Omega)\leq r,~\Sigma\geq \Omega\geq 0, \Lambda\in\mathbb{S}_0^n\} \nonumber\\
&\geq\max_\Lambda\min_{\Omega} \{\|\Sigma+\Lambda-{\Omega}\|_F^2-\|\Sigma+\Lambda\|_F^2+2\langle \Lambda,\Sigma\rangle\nonumber\\
 &\hspace{102pt}+ \|\Sigma\|_F^2|~\text{rank}(\Omega)\leq r,\Lambda\in\mathbb{S}_0^n\}\nonumber\\
&=\max_\Lambda \{-\|\Sigma+\Lambda\|_r^2+2\langle \Lambda,\Sigma\rangle + \|\Sigma\|_F^2|~\Lambda\in\mathbb{S}_0^n\},
\end{align}
where the first equality is due to that the maximization over $\Lambda\in\mathbb{S}_0^n$ forces $\Sigma-\Omega$ to be diagonal, the first inequality follows from the max-min inequality, the last inequality is due to that the constraint of $\Sigma\geq \Omega\geq 0$ is removed, and the last equality follows from the standard SVD-approximation to $\Sigma+\Lambda$ in Section~\ref{subsec:lowrank_approx}.

Here, the problem (\ref{eq:relaxed_FK}) is a maximization of a concave function with convex constraints, hence it is a convex problem, which is our targeted convex relaxation to the original non-convex problem. Using similar tricks in \cite{grussler2018low}, we can equivalently transform (\ref{eq:relaxed_FK}) into the following SDP:
\begin{align}\label{eq:SDP_FK}
\max_{T,\Lambda,\gamma} &-\text{tr}(T)-\gamma(n-r)+2\langle \Lambda,\Sigma\rangle+\|\Sigma\|_F^2\nonumber\\
\text{s.t.}~&\Lambda\in\mathbb{S}^n_0,~T- \gamma I\in\mathbb{S}_+^n,\\
~&\begin{bmatrix}
    T & \Sigma+\Lambda \\
    \Sigma+\Lambda & I \\
  \end{bmatrix}\in\mathbb{S}_+^{2n}.\nonumber
\end{align}

\subsection{Proposed Algorithm}
Suppose we have solved the SDP in (\ref{eq:SDP_FK}) and obtained an optimal dual variable $\Lambda^\star$. What is the most appropriate value for the primal variable $\Omega$ based on the dual optimum? The following theorem shows how we obtain the optimal primal variable $\Omega^\star$ when the duality gap is zero, i.e.,
\begin{multline}\label{eq:primal_eq_dual}
	\min_{\Omega} \{\|\Sigma-\Omega\|_F^2|~ \text{\rm rank}(\Omega)\leq r,~\Sigma\geq \Omega\geq 0,~\Sigma-\Omega\in\mathbb{D}^n\} \\
	=\max_\Lambda \{-\|\Sigma+\Lambda\|_r^2+2\langle \Lambda,\Sigma\rangle + \|\Sigma\|_F^2|~\Lambda\in\mathbb{S}_0^n\}.
\end{multline}
\begin{theorem}\label{thm:proposed_recover}
	Let $\Sigma\in\mathbb{S}^n_{++}$ and equality (\ref{eq:primal_eq_dual}) be true. Then a solution to (\ref{eq:reformulated_FK}) satisfies
\begin{align}\label{eq:Hstar}
\Omega^\star\in\text{\rm svd}_r(\Sigma+\Lambda^\star),
\end{align}
	where $\Lambda^\star$ solves (\ref{eq:SDP_FK}).
\end{theorem}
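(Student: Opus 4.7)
The plan is to start from an arbitrary optimal primal solution $\Omega^\star$ of (\ref{eq:reformulated_FK}) and show that the zero-duality-gap hypothesis forces $\Omega^\star$ to coincide with an $r$-th order SVD-truncation of $\Sigma+\Lambda^\star$. The pivotal observation is that feasibility of $\Omega^\star$ in (\ref{eq:reformulated_FK}) requires $\Sigma-\Omega^\star\in\mathbb{D}^n$, while $\Lambda^\star\in\mathbb{S}_0^n$ by dual feasibility. Since the Frobenius inner product of a diagonal matrix and a zero-diagonal matrix vanishes, this gives the orthogonality
$$\langle \Lambda^\star,\Sigma-\Omega^\star\rangle = 0,$$
which is the bridge that lets me convert a Frobenius cost on $\Sigma-\Omega^\star$ into one on $\Sigma+\Lambda^\star-\Omega^\star$.

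Concretely, I would expand
$$\|\Sigma+\Lambda^\star-\Omega^\star\|_F^2 = \|\Sigma-\Omega^\star\|_F^2 + 2\langle \Lambda^\star,\Sigma-\Omega^\star\rangle + \|\Lambda^\star\|_F^2,$$
which, by the orthogonality above, collapses to $\|\Sigma-\Omega^\star\|_F^2+\|\Lambda^\star\|_F^2$. Next, I would invoke the zero-duality-gap hypothesis (\ref{eq:primal_eq_dual}) to replace $\|\Sigma-\Omega^\star\|_F^2$ by the dual optimum $-\|\Sigma+\Lambda^\star\|_r^2+2\langle \Lambda^\star,\Sigma\rangle+\|\Sigma\|_F^2$, and then use the elementary identity $\|\Sigma+\Lambda^\star\|_F^2=\|\Sigma\|_F^2+2\langle \Lambda^\star,\Sigma\rangle+\|\Lambda^\star\|_F^2$ to rewrite the sum as
$$\|\Sigma+\Lambda^\star-\Omega^\star\|_F^2 = \|\Sigma+\Lambda^\star\|_F^2 - \|\Sigma+\Lambda^\star\|_r^2.$$

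The right-hand side is precisely the optimal value of the standard low-rank approximation problem (\ref{eq:standard_problem}) at $A=\Sigma+\Lambda^\star$ with target rank $r$, as recalled in Section~\ref{subsec:lowrank_approx}. Because $\Omega^\star$ is primal feasible it already satisfies $\text{rank}(\Omega^\star)\le r$, so it attains this minimum; the Schmidt-Mirsky theorem then yields $\Omega^\star\in\text{svd}_r(\Sigma+\Lambda^\star)$, which is the conclusion (\ref{eq:Hstar}). I expect the main obstacle to be the orthogonality step above, since it is the nontrivial structural link between the primal and dual feasible sets; once it is in place, the remainder is a routine completion-of-squares followed by an appeal to Schmidt-Mirsky. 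Note that no spectral-gap assumption on $\Sigma+\Lambda^\star$ is needed, since the statement is a set membership rather than uniqueness, and feasibility issues (such as $\Sigma\ge\Omega^\star\ge 0$) are inherited from the assumed optimal $\Omega^\star$ rather than verified from scratch.
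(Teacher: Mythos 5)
Your proposal is correct and follows essentially the same route as the paper: both identify $\Omega^\star$ as a minimizer of the rank-constrained Frobenius approximation of $\Sigma+\Lambda^\star$ and conclude via the Schmidt--Mirsky characterization of $\text{svd}_r(\cdot)$. Your explicit computation --- the orthogonality $\langle\Lambda^\star,\Sigma-\Omega^\star\rangle=0$ plus completion of squares showing $\|\Sigma+\Lambda^\star-\Omega^\star\|_F^2=\|\Sigma+\Lambda^\star\|_F^2-\|\Sigma+\Lambda^\star\|_r^2$ --- is simply an unpacked, and in fact more carefully justified, version of the paper's assertion that the zero duality gap forces every solution of (\ref{eq:reformulated_FK}) to solve the inner SVD-approximation problem at $\Lambda=\Lambda^\star$.
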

\begin{proof}
	Since equality~(\ref{eq:primal_eq_dual}) is true, all the inequalities above (\ref{eq:relaxed_FK}) are actually equalities. Hence a solution to (\ref{eq:reformulated_FK}) necessarily solves the following problem:
\begin{multline}
\min_{\Omega} \{\|\Sigma+\Lambda^\star-{\Omega}\|_F^2-\|\Sigma+\Lambda^\star\|_F^2+2\langle \Lambda^\star,\Sigma\rangle\\
 + \|\Sigma\|_F^2|~\text{rank}(\Omega)\leq r\}.
\end{multline}
By the standard SVD-approximation shown in (\ref{eq:standard_problem}), the solution to (\ref{eq:reformulated_FK}) satisfies
	$$\Omega^\star\in\text{\rm svd}_r(\Sigma+\Lambda^\star),$$
which completes the proof.
\end{proof}
As we see from the theorem, $\Omega^\star$ may be selected \textcolor{black}{as an appropriate candidate to test the feasibility of (\ref{eq:reformulated_FK}). In this case, we may first obtain an $\Omega^\star\in\text{\rm svd}_r(\Sigma+\Lambda^\star)$,} then check whether $\Omega^\star$ is feasible to (\ref{eq:reformulated_FK}). It is clear that $\text{rank}(\Omega^\star)=r$ due to (\ref{eq:rank_Ahat}), hence it suffices to check whether $\Sigma\geq \Omega^\star\geq 0$ and $\Sigma-\Omega^\star\in\mathbb{D}^n$.

Based on the above developments, we propose the following algorithm involving only convex optimizations to solve the Frisch-Kalman problem.
\begin{algorithm}[H]\label{alg:proposed}
	\caption{Proposed Method to Frisch-Kalman Problem}
	\begin{description}
		\item [\textbf{Step~1}] ~Given $\Sigma\in\mathbb{S}^n_{++}$. Set the initial searching rank as $r\in[1,n-1]$.
		\item [\textbf{Step~2}] ~Compute $\Lambda^\star$ via the SDP in (\ref{eq:SDP_FK}).
		\item [\textbf{Step~3}] ~Compute an $\Omega^\star\in\text{svd}_r(\Sigma+\Lambda^\star)$. Check whether $\Sigma\geq \Omega^\star\geq 0$ and $\Sigma-\Omega^\star\in\mathbb{D}^n$. If not, let $r:=r+1$ and go to \textbf{Step~2}.
		\item [\textbf{Step~4}] ~\textcolor{black}{An upper bound of the Frisch-Kalman problem (\ref{eq:FrischKalman}) is obtained as $r^\star:=\text{rank}(\Omega^\star)\geq\text{mr}(\Sigma)$.}
	\end{description}
\end{algorithm}

\subsection{Application to Shapiro Problem}
Consider the following variant of the Frisch-Kalman problem, called the Shapiro problem \cite{shapiro1982rank}, where the constraint that $\Delta$ is nonnegative is relaxed. Investigation into such a relaxed problem brings about more direct understanding on how the off-diagonal entries of $\Sigma$ affect the minimization of its rank.
\begin{definition}[Shapiro Problem]
	Given $\Sigma\in\mathbb{S}^n_{++}$, determine
	\begin{equation}\label{eq:Shapiro_problem}
	\begin{aligned}
	\text{\rm mr}_s(\Sigma) =\min_{\Omega,\Delta}\{\text{\rm rank}(\Omega)| ~\Sigma = \Omega+\Delta,\\
	\Omega\in\mathbb{S}_+^n, {\Delta}\in\mathbb{D}^n
	\}.
	\end{aligned}
	\end{equation}
\end{definition}
Actually, Shapiro and Frisch-Kalman problems share many similar properties. Naturally, we can apply the above algorithm to Shapiro problem with slight modifications, i.e., replacing \textbf{Step~3} with\\
\textbf{Step~3$^*$} \hspace{1pt}Compute $\Omega^\star\in\text{svd}_r(\Sigma+\Lambda^\star)$. Check whether $\Omega^\star\in\mathbb{S}^n_+$ and $\Sigma-\Omega^\star\in\mathbb{D}^n$. If not, let $r:=r+1$ and go to \textbf{Step~2}.

The obtained rank $r^\star$ satisfies that $\text{mr}_s(\Sigma)\leq r^\star$.
\subsection{Extension to the Complex-Valued Case}\label{sec:complex_case}
Denote by $\mathbb{H}_{+}$ ($\mathbb{H}_{++}$, resp.) the set of all positive semi-definite (definite, resp.) matrices in $\mathbb{C}^{n\times n}$.
The Frisch-Kalman problem can be extended to the case with complex-valued matrices as follows.
\begin{definition}[Complex-Valued Frisch-Kalman Problem]
	Given $\Sigma\in\mathbb{H}^n_{++}$, determine
	\begin{equation}\label{eq:Shapiro_problem}
	\begin{aligned}
	\text{\rm mr}(\Sigma) =\min_{\Omega,\Delta}\{\text{\rm rank}(\Omega)| ~\Sigma = \Omega+\Delta,\\
	\Omega\in\mathbb{H}_+^n, {\Delta}\in\mathbb{D}^n_+
	\}.
	\end{aligned}
	\end{equation}
\end{definition}
In this case, we may directly apply Algorithm~1 to the above problem by suitably replacing all the involved symmetric matrices with the Hermitian ones.

\section{Conditions on Tight Relaxation}\label{sec:tight_relax}

From the previous developments, we know  (\ref{eq:relaxed_FK}) is a convex optimization problem  and its optimal value is a lower bound of that of the reformulated Frisch-Kalman problem (\ref{eq:reformulated_FK}). One may ask naturally how tight the bound is, or how tight the convex relaxation is. In general, (\ref{eq:relaxed_FK}) is not equivalent to (\ref{eq:reformulated_FK}), but we will show that within a certain class of $\Sigma$, the solutions to (\ref{eq:relaxed_FK}) will also solve (\ref{eq:reformulated_FK}) via $\Omega^\star\in\text{svd}_r(\Sigma+\Lambda^\star)$, hence the duality gap is zero.

We start with the following lemma, where $\hat{\Omega}$ may be viewed as the covariance of the underlying data. We remove the requirement that $\Sigma>0$ temporarily.
\begin{lemma}\label{lem:given_low_rank}
	Let $\Sigma=\hat{\Omega}\in\mathbb{S}^n_+$ and $\text{\rm rank}(\hat{\Omega})=r$. Then the solutions to (\ref{eq:relaxed_FK}) solve (\ref{eq:reformulated_FK}), and equality~(\ref{eq:primal_eq_dual}) holds
	with optima attained on $\Lambda^\star=0$ and $\Omega^\star=\hat{\Omega}$.
\end{lemma}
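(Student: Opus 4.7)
The plan is to verify the claimed optima directly and invoke weak duality plus Theorem~\ref{thm:proposed_recover} to conclude. First, I would check that $\Omega = \hat{\Omega}$ is feasible for (\ref{eq:reformulated_FK}): under the hypothesis $\Sigma = \hat{\Omega}$, we have $\text{rank}(\Omega) = r$, $\Sigma \geq \Omega = \hat{\Omega} \geq 0$, and $\Sigma - \Omega = 0 \in \mathbb{D}^n$. The objective evaluates to $\|\Sigma - \Omega\|_F^2 = 0$, and since the objective is nonnegative this is automatically a minimizer, so the primal optimal value is $0$.

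Next I would evaluate the dual objective in (\ref{eq:relaxed_FK}) at $\Lambda = 0$. Since $\Sigma = \hat{\Omega}$ has rank exactly $r$, its singular values beyond the $r$-th vanish, so $\|\Sigma\|_r^2 = \|\Sigma\|_F^2$, and the dual objective equals $-\|\Sigma\|_r^2 + 0 + \|\Sigma\|_F^2 = 0$. Thus the dual value at $\Lambda = 0$ is $0$. Combining this with the weak-duality chain derived in (\ref{eq:relaxed_FK}), which gives (primal) $\geq$ (dual), we obtain
\begin{equation*}
0 = \text{(primal value)} \;\geq\; \text{(dual optimal value)} \;\geq\; \text{(dual value at }\Lambda=0\text{)} = 0,
\end{equation*}
so all quantities coincide. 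In particular, $\Lambda^\star = 0$ is a maximizer of (\ref{eq:relaxed_FK}), and equality~(\ref{eq:primal_eq_dual}) holds.

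Finally, since the duality gap vanishes, Theorem~\ref{thm:proposed_recover} applies and yields $\Omega^\star \in \text{svd}_r(\Sigma + \Lambda^\star) = \text{svd}_r(\hat{\Omega})$. Because $\text{rank}(\hat{\Omega}) = r$, we have $\sigma_r(\hat{\Omega}) > 0 = \sigma_{r+1}(\hat{\Omega})$, so the set $\text{svd}_r(\hat{\Omega})$ is the singleton $\{\hat{\Omega}\}$, and thus $\Omega^\star = \hat{\Omega}$, completing the claim.

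I do not anticipate any serious obstacle here: the lemma is essentially a sanity check that the relaxation is tight in the noiseless, exactly low-rank case. The only mild subtlety is noting that $\|\Sigma\|_r^2 = \|\Sigma\|_F^2$ exploits the rank hypothesis, and that the SVD-approximation set collapses to a singleton thanks to the strict gap between $\sigma_r$ and $\sigma_{r+1}$.
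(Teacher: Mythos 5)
Your proof is correct, but it reaches the conclusion by a different mechanism than the paper. The paper proves dual optimality of $\Lambda^\star=0$ \emph{directly}: it unitarily diagonalizes $\hat{\Omega}=U\,\mathrm{diag}(S_r,0)\,U^T$, partitions $U^T\Lambda U$ conformably, and invokes a singular-value inequality (Stewart, Theorem~4.4) to show that the dual objective $-\|\hat{\Omega}+\Lambda\|_r^2+2\langle\Lambda,\hat{\Omega}\rangle+\|\hat{\Omega}\|_F^2$ is $\leq 0$ for \emph{every} $\Lambda\in\mathbb{S}_0^n$, with equality at $\Lambda=0$, and then sandwiches the primal value between $0$ and $\|\hat{\Omega}-\hat{\Omega}\|_F^2=0$. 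You instead evaluate both problems only at the claimed optimizers (primal value $0$ at $\Omega=\hat{\Omega}$; dual value $0$ at $\Lambda=0$, using $\|\hat{\Omega}\|_r=\|\hat{\Omega}\|_F$ since $\mathrm{rank}(\hat{\Omega})=r$) and squeeze with the weak-duality chain established above (\ref{eq:relaxed_FK}), getting $0=\text{primal}\geq\text{dual optimum}\geq 0$. This is more elementary: it removes the block computation and the external matrix-approximation inequality, at the price of leaning on the inequality chain preceding (\ref{eq:relaxed_FK}), which is already derived in the paper, so the reliance is legitimate; the paper's route, by contrast, certifies global nonpositivity of the dual objective without appealing to that chain. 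Your closing appeal to Theorem~\ref{thm:proposed_recover} together with the singleton property of $\text{svd}_r(\hat{\Omega})$ is not needed to show that $\Omega^\star=\hat{\Omega}$ attains the primal optimum --- your first paragraph already establishes that --- but it is harmless and in fact gives the slightly stronger fact that $\hat{\Omega}$ is the \emph{unique} primal solution; note that, exactly like the paper, you substantiate ``the solutions to (\ref{eq:relaxed_FK}) solve (\ref{eq:reformulated_FK})'' only through the exhibited maximizer $\Lambda^\star=0$, not for every conceivable dual maximizer, so no new gap is introduced relative to the original argument.
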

\begin{proof}
	Since $\hat{\Omega}\in\mathbb{S}^n_+$ and $\text{\rm rank}(\hat{\Omega})=r$, it can be unitarily diagonalized, i.e.,
	$\hat{\Omega}=USU^T$ where
    $$S=\begin{bmatrix}
          S_r & 0 \\
          0  & 0_{n-r} \\
        \end{bmatrix}\in\mathbb{D}_+^n,~S_r >0~\text{and}~U~\text{is unitary}.$$
    Let $\Lambda\in\mathbb{S}_0^n$ and partition $U^T\Lambda U$ conformably with $S$ via
	$$U^T\Lambda U=\begin{bmatrix}
	L_{11} & L_{12}\\
	L_{21} & L_{22}
	\end{bmatrix}.$$
	Hence,
	\begin{align*}
    -\|&\hat{\Omega}+\Lambda\|_r^2+2\langle \Lambda,\hat{\Omega}\rangle + \|\hat{\Omega}\|_F^2\\
	&=-\|S+U^T\Lambda U\|_r^2+ 2\langle U^T\Lambda U,S\rangle + \|\hat{\Omega}\|_r^2\\
	&=-\left\|\begin{bmatrix}
	L_{11}+S_r & L_{12}\\
	L_{21} & L_{22}
	\end{bmatrix}\right\|_r^2 + 2\langle L_{11},S_r\rangle + \|\hat{\Omega}\|_F^2\\
	&\leq -\|L_{11}+S_r\|_F^2+ 2\langle L_{11},S_r\rangle + \|\hat{\Omega}\|_F^2\\
	&=-\|S_r\|_F^2-\|L_{11}\|_F^2+\|\hat{\Omega}\|_F^2\\
	&\leq -\|S_r\|_F^2+\|\hat{\Omega}\|_F^2=0,
	\end{align*}
	where the first inequality follows from \cite[Theorem~4.4]{stewart1990matrix}
	and the inequalities become equalities when $\Lambda=0$. It is clear that
	\begin{multline*}
    0\leq \min_{\Omega} \left\{\|\hat{\Omega}-{\Omega}\|_F^2\Big|~ \text{\rm rank}(\Omega)\leq r,\right.\\
    \left.\hat{\Omega}\geq \Omega\geq 0,~\hat{\Omega}-\Omega\in\mathbb{D}^n\right\}\leq \|\hat{\Omega}-\hat{\Omega}\|_F^2=0.
    \end{multline*}
	Therefore, equality (\ref{eq:primal_eq_dual}) is true with optima attained on $\Lambda^\star=0$ and $\Omega^\star=\hat{\Omega}$.
\end{proof}
The lemma gives us an intuition that when the low-rank matrix $\hat{\Omega}$ is slightly perturbed by a diagonal matrix $\Delta$, the optimum of (\ref{eq:relaxed_FK}) is very likely to be attained on $\Lambda^\star$ that is close to zero. The underlying reason is that $\|\Delta\|_F$ is close to zero, and hence we expect certain ``continuity'' properties, considering that a rank function is not continuous at all. Furthermore, the obtained $\Lambda^\star$ might solve (\ref{eq:reformulated_FK}) via $\Omega^\star\in\text{svd}_r(\Sigma+\Lambda^\star)$. The intuition is not completely true for the most general case, but the underlying idea helps develop the following conditions on the tight relaxation.

Given $\hat{\Omega}\in\mathbb{S}^n_{+}$ with $\text{rank}(\hat{\Omega})=r$, define a set of diagonal matrices via
\begin{multline}\label{eq:perturbed_diag}
\mathcal{D}_{\hat{\Omega}}:=\Big\{{\Delta}\in\mathbb{D}_+^n\Big|~\exists~\Lambda\in\mathbb{S}_0^n,~\text{such that}\Big. \\ \Big.~\|\Delta+\Lambda\|<\sigma_r(\hat{\Omega}),~\mathcal{R}(\Delta+\Lambda)\perp\mathcal{R}(\hat{\Omega})\Big\}.
\end{multline}
In correspondence, define the following set of positive definite matrices
\begin{align}\label{eq:perturbed_Sigma}
\mathcal{S}_{\hat{\Omega}}:=\left\{\Sigma=\hat{\Omega}+\Delta>0\Big|~\Delta\in\mathcal{D}_{\hat{\Omega}}\right\}.
\end{align}
Obviously, $0\in\mathcal{D}_{\hat{\Omega}}$. In addition it is not hard to verify that if ${\Delta}\in\mathcal{D}_{\hat{\Omega}}$, we have $\alpha \Delta\in\mathcal{D}_{\hat{\Omega}}$ for all $\alpha\in[0,1]$. Together with condition $\|\Delta+\Lambda\|<\sigma_r(\hat{\Omega})$, we know that $\mathcal{D}_{\hat{\Omega}}$ characterizes a neighborhood of diagonal matrices with ``small'' norms. In the definition, the condition of $ \mathcal{R}(\Delta+\Lambda)\perp\mathcal{R}(\hat{\Omega})$ has the following implication.
\begin{lemma}\label{lem:implication}
	Let $\hat{\Omega}\in\mathbb{S}^n_{+}$ with $\text{\rm rank}(\hat{\Omega})=r$. If $X\in\mathbb{S}^n$, $\|X\|<\sigma_r(\hat{\Omega})$ and $\mathcal{R}(X)\perp\mathcal{R}(\hat{\Omega}) $, then
	$$[\hat{\Omega}+X]_r=\hat{\Omega}.$$
\end{lemma}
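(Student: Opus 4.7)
The plan is to exploit the fact that $\hat{\Omega}$ is symmetric PSD of rank $r$, hence admits a spectral decomposition supported on $\mathcal{R}(\hat{\Omega})$, and that the hypothesis $\mathcal{R}(X)\perp\mathcal{R}(\hat{\Omega})$ forces $X$ to ``live'' entirely in the orthogonal complement. This lets me bring both matrices simultaneously into a block-diagonal form and compute the SVD-truncation directly.

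First, I would choose an orthonormal basis adapted to $\mathcal{R}(\hat{\Omega})$: let $U_1\in\mathbb{R}^{n\times r}$ have orthonormal columns spanning $\mathcal{R}(\hat{\Omega})$, and $U_2\in\mathbb{R}^{n\times(n-r)}$ have orthonormal columns spanning $\mathcal{R}(\hat{\Omega})^\perp$, so that $U=[U_1\ U_2]$ is unitary. Since $\hat{\Omega}\in\mathbb{S}^n_+$ has rank $r$, we have
\begin{equation*}
U^T\hat{\Omega}U=\begin{bmatrix}D_1 & 0\\ 0 & 0\end{bmatrix},
\end{equation*}
where $D_1\in\mathbb{R}^{r\times r}$ is diagonal and positive with diagonal entries $\sigma_1(\hat{\Omega}),\dots,\sigma_r(\hat{\Omega})$.

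Next, I would use the range condition on $X$. Because $X=X^T$, the identity $\mathcal{R}(X)=\mathcal{K}(X)^\perp$ holds, so $\mathcal{R}(X)\perp\mathcal{R}(\hat{\Omega})$ implies $\mathcal{R}(\hat{\Omega})\subseteq\mathcal{K}(X)$, i.e. $XU_1=0$. Combined with $X=X^T$ this also gives $U_1^TX=0$, and therefore
\begin{equation*}
U^TXU=\begin{bmatrix}0 & 0\\ 0 & M\end{bmatrix},\qquad M:=U_2^TXU_2\in\mathbb{S}^{n-r},
\end{equation*}
with $\|M\|\le\|X\|<\sigma_r(\hat{\Omega})$. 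Consequently
\begin{equation*}
U^T(\hat{\Omega}+X)U=\begin{bmatrix}D_1 & 0\\ 0 & M\end{bmatrix}.
\end{equation*}

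Finally, I would read off the singular values of $\hat{\Omega}+X$ from this block-diagonal symmetric matrix: they are the absolute values of the eigenvalues of $D_1$ together with those of $M$. The former are precisely $\sigma_1(\hat{\Omega})\ge\cdots\ge\sigma_r(\hat{\Omega})$, all strictly larger than $\|M\|$, which bounds every singular value coming from the $M$-block. Thus $\sigma_i(\hat{\Omega}+X)=\sigma_i(\hat{\Omega})$ for $i=1,\dots,r$ with a strict gap $\sigma_r(\hat{\Omega}+X)>\sigma_{r+1}(\hat{\Omega}+X)$, so $[\hat{\Omega}+X]_r$ is well defined; and the corresponding left/right singular vectors are columns of $U_1$ (paired up to signs with eigenvalues of $D_1$). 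Reassembling via the definition of $[\,\cdot\,]_r$ gives $[\hat{\Omega}+X]_r = U_1 D_1 U_1^T = \hat{\Omega}$, as claimed.

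The only subtle point I anticipate is ensuring there is no ``mixing'' between the two diagonal blocks in the SVD ordering, but the strict inequality $\|M\|<\sigma_r(\hat{\Omega})$ separates the two spectra cleanly, which is precisely why the hypothesis is stated with a strict inequality. Everything else is bookkeeping on the block decomposition.
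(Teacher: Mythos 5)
Your proposal is correct and follows essentially the same route as the paper: the paper writes the same argument with explicit rank-one SVD expansions of $\hat{\Omega}$ and $X$ (noting $w_i=\pm v_i$ and $\langle u_i,v_j\rangle=0$), while you phrase it as a block-diagonalization in a basis adapted to $\mathcal{R}(\hat{\Omega})$; the key step in both is that $\|X\|<\sigma_r(\hat{\Omega})$ cleanly separates the spectra so the top-$r$ truncation returns $\hat{\Omega}$.
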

\begin{proof}
	Applying SVD on $\hat{\Omega}$, we obtain
	$$\hat{\Omega}=\sum_{i=1}^r \sigma_i(\hat{\Omega})u_iu_i^T.$$
	Since $\mathcal{R}(X)\perp\mathcal{R}(\hat{\Omega})$, it follows that $\text{rank}(X)\leq n-r$. Applying SVD on $X$, we obtain
	$$X=\sum_{i=1}^{n-r}\sigma_i(X)w_iv_i^T.$$
	Since $X$ is symmetric, $w_i=\pm v_i$. Again from $\mathcal{R}(X)\perp\mathcal{R}(\hat{\Omega})$, we know $\langle u_i,v_j \rangle=0$ for all $i=1,2,\dots,r$ and $j=1,2,\dots,n-r$. As a result, it follows from $\bar{\sigma}(X)<\sigma_r(\hat{\Omega})$ that
	$$\hat{\Omega}+X=\sum_{i=1}^r \sigma_i(\hat{\Omega})u_iu_i^T+\sum_{j=1}^{n-r}\sigma_j(X)w_jv_j^T$$
	is an SVD for $\hat{\Omega}+X$, and
	$$[\hat{\Omega}+X]_r=\sum_{i=1}^r \sigma_i(\hat{\Omega})u_iu_i^T=\hat{\Omega}.$$
	This completes the proof.
\end{proof}
\textcolor{black}{In general, $\mathcal{S}_{\hat{\Omega}}$ is non-empty, as shown in the following lemma.
\begin{lemma}\label{lem:nonempty_S}
	Let $\hat{\Omega}\in\mathbb{S}^n_{+}$ with $\text{\rm rank}(\hat{\Omega})=r<n$. Then
	$$\mathcal{S}_{\hat{\Omega}}\neq\emptyset.$$
\end{lemma}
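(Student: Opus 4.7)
The plan is to construct an explicit element of $\mathcal{S}_{\hat{\Omega}}$. The natural candidate for $\Delta+\Lambda$ is a small multiple of the orthogonal projector $P$ onto $\mathcal{K}(\hat{\Omega}) = \mathcal{R}(\hat{\Omega})^{\perp}$, since this automatically delivers the range-orthogonality condition required in (\ref{eq:perturbed_diag}). Because $r<n$, we have $P\neq 0$ and $\|P\|=1$.

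Concretely, I would pick any $\epsilon\in(0,\sigma_r(\hat{\Omega}))$ and set
\begin{equation*}
\Delta:=\epsilon\,\text{diag}(P), \qquad \Lambda:=\epsilon\bigl(P-\text{diag}(P)\bigr),
\end{equation*}
so that $\Delta+\Lambda=\epsilon P$. Checking the easy conditions is routine: the diagonal entries of a projector satisfy $[P]_{ii}=\|Pe_i\|^2\geq 0$, so $\Delta\in\mathbb{D}_+^n$; the matrix $P-\text{diag}(P)$ is symmetric with zero diagonal, so $\Lambda\in\mathbb{S}_0^n$; the norm bound $\|\Delta+\Lambda\|=\epsilon\|P\|=\epsilon<\sigma_r(\hat{\Omega})$ holds by the choice of $\epsilon$; and $\mathcal{R}(\Delta+\Lambda)=\mathcal{R}(P)=\mathcal{K}(\hat{\Omega})\perp\mathcal{R}(\hat{\Omega})$.

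The only delicate point is verifying $\Sigma=\hat{\Omega}+\Delta>0$ so that $\Sigma\in\mathcal{S}_{\hat{\Omega}}$. Suppose $x\in\mathcal{K}(\hat{\Omega}+\Delta)$. Since both $\hat{\Omega}$ and $\Delta$ are positive semidefinite, the quadratic form $x^T(\hat{\Omega}+\Delta)x=0$ forces $x\in\mathcal{K}(\hat{\Omega})$ and $x_i=0$ for every index $i$ with $[\Delta]_{ii}>0$. The latter coordinates are exactly those $i$ for which $Pe_i\neq 0$, i.e., those $i$ with $e_i\notin\mathcal{R}(\hat{\Omega})$. Hence $x$ is supported on $J:=\{i:e_i\in\mathcal{R}(\hat{\Omega})\}$, which means $x\in\text{span}\{e_j:j\in J\}\subseteq\mathcal{R}(\hat{\Omega})$. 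Combined with $x\in\mathcal{K}(\hat{\Omega})=\mathcal{R}(\hat{\Omega})^{\perp}$, this yields $x=0$, so $\Sigma>0$.

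The main obstacle is precisely this positivity check: it is not immediate because $\hat{\Omega}$ is rank-deficient and $\text{diag}(P)$ may have some zero entries (whenever a coordinate vector happens to lie in $\mathcal{R}(\hat{\Omega})$). The key observation that resolves it is that the zero-diagonal coordinates of $P$ correspond exactly to the coordinate axes contained in $\mathcal{R}(\hat{\Omega})$, so any residual kernel vector of $\hat{\Omega}+\Delta$ would have to lie simultaneously in $\mathcal{R}(\hat{\Omega})$ and in its orthogonal complement, forcing it to vanish. This completes the construction and shows $\mathcal{S}_{\hat{\Omega}}\neq\emptyset$.
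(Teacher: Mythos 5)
Your proof is correct, and it takes a genuinely different route from the paper's. The paper first reduces, without loss of generality, to a block structure $\hat{\Omega}=\begin{bmatrix}\Omega_0 & 0\\ 0 & \Omega_1\end{bmatrix}$ with $\Omega_0$ diagonal positive definite and $\Omega_1$ built from zero or irreducible blocks, then picks a kernel vector $v=\begin{bmatrix}0\\ v_1\end{bmatrix}$ with $v_1$ element-wise nonzero and small norm, and uses the rank-one matrix $vv^T$ to define $\Delta=\mathrm{diag}(vv^T)$ and $\Lambda=vv^T-\Delta$; positive definiteness of $\hat{\Omega}+\Delta$ is then checked blockwise, exploiting that $\Delta$ is strictly positive on the $\Omega_1$-coordinates. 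You instead take $\Delta+\Lambda=\epsilon P$ with $P$ the orthogonal projector onto $\mathcal{K}(\hat{\Omega})$ and $0<\epsilon<\sigma_r(\hat{\Omega})$, which makes the range-orthogonality and norm conditions immediate, and you resolve the only delicate point --- positivity despite possible zero diagonal entries of $P$ --- by observing that $[P]_{ii}=\|Pe_i\|^2=0$ exactly when $e_i\in\mathcal{R}(\hat{\Omega})$, so any kernel vector of $\hat{\Omega}+\Delta$ would lie in $\mathcal{R}(\hat{\Omega})\cap\mathcal{R}(\hat{\Omega})^{\perp}=\{0\}$. Your argument avoids the paper's least transparent steps (the permutation/irreducibility reduction and the existence of an element-wise nonzero kernel vector), at the cost of using a generally full-corank perturbation $\Delta+\Lambda$ rather than the paper's explicit rank-one one; both yield a concrete element of $\mathcal{S}_{\hat{\Omega}}$, and yours is arguably the cleaner and more self-contained verification.
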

\begin{proof}
Without loss of generality, we may assume that
$$\hat{\Omega}=\begin{bmatrix}  \Omega_0 & 0\\ 0 & \Omega_1\end{bmatrix},$$
where $\Omega_0\in\mathbb{D}^k\cap\mathbb{S}^k_{++}$ with $0\leq k\leq r$, and  $\Omega_1\in\mathbb{S}_+^{n-k}$ is a block diagonal matrix with either zero blocks or irreducible blocks whose sizes are no less than two.
In this case, there exists
$$v=\begin{bmatrix}0\\ v_1\end{bmatrix}\in\mathcal{K}(\hat{\Omega})\setminus\{0\}$$
where $v_1\in\mathbb{R}^{n-k}$ is element-wise nonzero and $\|v\|<\sqrt{\sigma_r(\hat{\Omega})}$. Construct that $\Delta=\text{diag}(vv^T)\in\mathbb{D}_+$ and $\Lambda=vv^T-\Delta$. It is straightforward to verify that $\Delta\in\mathcal{D}_{\hat{\Omega}}$ and $[\Delta]_{ii}=[vv^T]_{ii}> 0$ for all $i=k+1,\dots,n$. It follows that
$$\Sigma:=\hat{\Omega}+\Delta=\begin{bmatrix}  \Omega_0 & 0\\ 0 & \Omega_1+\text{diag}(v_1v_1^T)\end{bmatrix}\in\mathbb{S}^n_{++},$$
whereby $\Sigma\in\mathcal{S}_{\hat{\Omega}}$.
\end{proof}
Given every low-rank matrix $\hat{\Omega}$, we obtain a neighborhood of noisy covariance matrices $\Sigma\in\mathcal{S}_{\hat{\Omega}}$ centered at $\hat{\Omega}$ by the above lemma. The following result shows that for each $\Sigma$ in $\mathcal{S}_{\hat{\Omega}}$,  the duality gap between optimization problems in (\ref{eq:reformulated_FK}) and (\ref{eq:relaxed_FK}) is zero.}
\begin{theorem}\label{thm:tight_relax}
	Let $\hat{\Omega}\in\mathbb{S}^n_{+}$ with $\text{\rm rank}(\hat{\Omega})=r<n$ and $\Sigma\in\mathcal{S}_{\hat{\Omega}}$.
	Then the solutions to (\ref{eq:relaxed_FK}) solve (\ref{eq:reformulated_FK}), and equality~(\ref{eq:primal_eq_dual}) holds
	with optima attained on $\Omega^\star=\hat{\Omega}$ and $\Lambda^\star$ satisfying $$\|\Lambda^\star+\Sigma-\hat{\Omega}\|<\sigma_r(\hat{\Omega})~\text{and}~\mathcal{R}(\Lambda^\star+\Sigma-\hat{\Omega})\perp\mathcal{R}(\hat{\Omega}).$$
\end{theorem}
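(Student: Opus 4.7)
The plan is to exhibit an explicit primal–dual pair achieving equal objective values, which together with weak duality — already established in the chain leading to (\ref{eq:relaxed_FK}) — certifies optimality of both and gives the desired structural description of $\Lambda^\star$. By hypothesis $\Sigma \in \mathcal{S}_{\hat{\Omega}}$, so $\Sigma = \hat{\Omega} + \Delta$ with $\Delta \in \mathcal{D}_{\hat{\Omega}}$; the very definition of $\mathcal{D}_{\hat{\Omega}}$ then supplies a $\Lambda^\star \in \mathbb{S}_0^n$ with $\|\Delta + \Lambda^\star\| < \sigma_r(\hat{\Omega})$ and $\mathcal{R}(\Delta + \Lambda^\star) \perp \mathcal{R}(\hat{\Omega})$. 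I would take this $\Lambda^\star$ as the dual candidate and $\Omega^\star := \hat{\Omega}$ as the primal candidate. Feasibility is immediate on both sides: $\Lambda^\star \in \mathbb{S}_0^n$ by construction, and $\hat{\Omega}$ has rank $r$, is positive semidefinite, and satisfies $\Sigma - \hat{\Omega} = \Delta \in \mathbb{D}_+^n$. Since $\Sigma - \hat{\Omega} = \Delta$, the conditions $\|\Lambda^\star + \Sigma - \hat{\Omega}\| < \sigma_r(\hat{\Omega})$ and $\mathcal{R}(\Lambda^\star + \Sigma - \hat{\Omega}) \perp \mathcal{R}(\hat{\Omega})$ promised by the theorem hold automatically.

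The next step is the dual evaluation. Writing $\Sigma + \Lambda^\star = \hat{\Omega} + (\Delta + \Lambda^\star)$, the matrix $X := \Delta + \Lambda^\star$ is symmetric with $\mathcal{R}(X) \perp \mathcal{R}(\hat{\Omega})$ and $\|X\| < \sigma_r(\hat{\Omega})$, so Lemma~\ref{lem:implication} gives $[\Sigma + \Lambda^\star]_r = \hat{\Omega}$ and therefore $\|\Sigma + \Lambda^\star\|_r^2 = \|\hat{\Omega}\|_F^2$. Expanding $\|\Sigma\|_F^2 = \|\hat{\Omega}\|_F^2 + 2\langle \hat{\Omega}, \Delta\rangle + \|\Delta\|_F^2$ and $\langle \Lambda^\star, \Sigma\rangle = \langle \Lambda^\star, \hat{\Omega}\rangle + \langle \Lambda^\star, \Delta\rangle$, and observing that $\langle \Lambda^\star, \Delta\rangle = 0$ since $\Lambda^\star$ has zero diagonal and $\Delta$ is diagonal, the dual objective collapses to $2\langle \Lambda^\star + \Delta, \hat{\Omega}\rangle + \|\Delta\|_F^2$. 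The range-orthogonality assumption forces $\langle \Lambda^\star + \Delta, \hat{\Omega}\rangle = 0$, so the dual value equals $\|\Delta\|_F^2$, which is exactly the primal value $\|\Sigma - \hat{\Omega}\|_F^2$ at $\Omega^\star = \hat{\Omega}$. Weak duality then upgrades this coincidence to (\ref{eq:primal_eq_dual}), and Theorem~\ref{thm:proposed_recover} identifies $\hat{\Omega}$ as a solution of (\ref{eq:reformulated_FK}) via $\Omega^\star \in \text{svd}_r(\Sigma + \Lambda^\star)$.

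The main obstacle I anticipate is a small linear-algebraic step: justifying that symmetry together with $\mathcal{R}(\Delta + \Lambda^\star) \perp \mathcal{R}(\hat{\Omega})$ implies $\langle \Delta + \Lambda^\star, \hat{\Omega}\rangle = 0$. I would argue via a spectral decomposition of $\hat{\Omega}$: the eigenvectors corresponding to nonzero eigenvalues span $\mathcal{R}(\hat{\Omega})$, each such eigenvector lies in $\mathcal{R}(\Delta + \Lambda^\star)^{\perp} = \mathcal{K}(\Delta + \Lambda^\star)$ (using symmetry of $\Delta + \Lambda^\star$), hence $(\Delta + \Lambda^\star)\hat{\Omega} = 0$ and the trace vanishes. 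Every other step is bookkeeping — essentially, the set $\mathcal{D}_{\hat{\Omega}}$ was engineered precisely so that Lemma~\ref{lem:implication} applies to $\Sigma + \Lambda^\star$, after which the dual objective simplifies mechanically to match the primal.
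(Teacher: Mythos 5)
Your proof is correct and follows essentially the same route as the paper: the same primal--dual pair supplied by the definition of $\mathcal{D}_{\hat{\Omega}}$ (take $\Omega^\star=\hat{\Omega}$ and the $\Lambda^\star$ guaranteed for $\Delta=\Sigma-\hat{\Omega}$), Lemma~\ref{lem:implication} to get $[\Sigma+\Lambda^\star]_r=\hat{\Omega}$, the same algebraic collapse of the dual objective to $\|\Delta\|_F^2$, and the weak-duality sandwich with the primal value at $\hat{\Omega}$ to close the gap. The only cosmetic differences are that you spell out why range-orthogonality forces $\langle\Delta+\Lambda^\star,\hat{\Omega}\rangle=0$ (the paper asserts it), and your closing appeal to Theorem~\ref{thm:proposed_recover} is not strictly needed, since the sandwich already certifies $\hat{\Omega}$ as a primal optimum.
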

\begin{proof}
	Decompose matrix $\Sigma$ into $\Sigma=\hat{\Omega}+\Delta$ with $\Delta\in\mathcal{D}_{\hat{\Omega}}$. Then there exists a $\Lambda^\star\in\mathbb{S}^n_0$ satisfying that $\mathcal{R}( \Delta+\Lambda^\star)\perp \mathcal{R}(\hat{\Omega})$ and that $\|\Delta+\Lambda^\star\|<\sigma_r(\hat{\Omega})$. Furthermore, it follows from Lemma~\ref{lem:implication} that
	$$[\Sigma+\Lambda^\star]_r=[\hat{\Omega}+\Lambda^\star+\Delta]_r=\hat{\Omega}.$$
	Again from $\mathcal{R}( \Delta+\Lambda^\star)\perp \mathcal{R}(\hat{\Omega})$, we obtain that $$\langle \Sigma-\hat{\Omega}+\Lambda^\star,\hat{\Omega}\rangle=\langle \Delta+\Lambda^\star,\hat{\Omega}\rangle =0.$$
	Using the above equalities, we have
	\begin{multline*}
	-\|\Sigma+\Lambda^\star\|_r^2+2\langle \Lambda^\star,\Sigma\rangle + \|\Sigma\|_F^2\\
    =-\|\hat{\Omega}\|_F^2-2\langle \Delta,\hat{\Omega}\rangle +\|\Sigma\|_F^2=\|\Delta\|_F^2.
	\end{multline*}
	Moreover, it follows from the inequalities above (\ref{eq:relaxed_FK}) that
	\begin{align*}
	&\|\Delta\|_F^2=\|\Sigma-\hat{\Omega}\|_F^2\\
	&\geq\min_{\Omega} \{\|\Sigma-\Omega\|_F^2|~\\
    &\hspace{40pt}\text{\rm rank}(\Omega)\leq r,\Sigma\geq \Omega\geq 0,\Sigma-\Omega\in\mathbb{D}^n\}\\
	&\geq \max_\Lambda \{-\|\Sigma+\Lambda\|_r^2+2\langle \Lambda,\Sigma\rangle + \|\Sigma\|_F^2|~\Lambda\in\mathbb{S}_0^n\} \\
	&\geq -\|\Sigma+\Lambda^\star\|_r^2+2\langle \Lambda^\star,\Sigma\rangle + \|\Sigma\|_F^2 =\|\Delta\|_F^2.
	\end{align*}
	As a result, the optima of (\ref{eq:reformulated_FK}) and (\ref{eq:relaxed_FK}) are attained on $\Omega^\star=\hat{\Omega}$ and a desired $\Lambda^\star$, and equality (\ref{eq:primal_eq_dual}) is true.
\end{proof}
This theorem shows that when $\Sigma\in\mathcal{S}_{\hat{\Omega}}$, the relaxation is tight, i.e., the non-convex Frisch-Kalman problem can be exactly solved by the proposed convex approach in Algorithm~1.
\subsection{Refined Analysis}
The definition of $\mathcal{D}_{\hat{\Omega}}$ relies on the existence of a seemingly irrelevant matrix $\Lambda\in\mathbb{S}_0^n$. A desired characterization of $\Delta$ for the tight relaxation may be given by a neighborhood with a regular shape, such as a ball or a box in $\mathbb{D}^n$. To achieve this, we refine the set $\mathcal{D}_{\hat{\Omega}}$ in the following.

Let $\hat{\Omega}\in\mathbb{S}_+^n$ with $\text{\rm rank}(\hat{\Omega})=r$. Let $V\in\mathbb{R}^{n\times (n-r)}$ be an isometry onto the kernel space of $\hat{\Omega}$.

Define a linear operator associated with $\hat{\Omega}$ via
$$\bm{E}:~\mathbb{S}^{n-r}\to \mathbb{D}^n= X\mapsto \text{diag}(VXV^T).$$
\textcolor{black}{Define the following value associated with $\bm{E}$ via
$$\phi(\hat{\Omega}):=\inf_{{\Delta}\in\mathbb{D}_+^n\setminus\{0\}}\frac{\|\Delta\|_F}{\|\bm{E}^{\dagger}(\Delta)\|},$$
if $\bm{E}$ is surjective; otherwise, $\phi(\hat{\Omega})=0$. Here,
$$\bm{E}^{\dagger}=\bm{E}^*(\bm{E}\bm{E}^*)^{-1}:~\mathbb{D}^n \to \mathbb{S}^{n-r}$$ is the Moore-Penrose pseudoinverse for a surjective linear operator $\bm{E}$, where $\bm{E}^*$ is the adjoint of $\bm{E}$.}

Since $X\in\mathbb{S}^{n-r}$ is symmetric, the linear operator $\bm{E}$ is essentially from $\mathbb{R}^{{(n-r)(n-r+1)}/{2}}$ to $\mathbb{R}^n$. As a result, a necessary condition for its surjectivity is that $$\frac{(n-r)(n-r+1)}{2}\geq n,~\text{or}~r\leq \frac{2n+1-\sqrt{8n+1}}{2}.$$
In many cases, $n$ is known to be much larger than $r$ because the covariance matrix of the true data has a low rank, and hence the above inequality holds in general. On the other hand, when the inequality is satisfied, the linear operator $\bm{E}$ is surjective for almost all low-rank matrices $\hat{\Omega}$, i.e., $\phi(\hat{\Omega})>0$ in general.

Clearly, $\bm{E}$ can be non-surjective in terms of some extreme settings on $\hat{\Omega}$. For example, consider a rank-$r$ matrix
$$\hat{\Omega}=\begin{bmatrix}
                 S & 0 \\
                 0 & 0 \\
               \end{bmatrix}\in\mathbb{S}_+^n,
$$
where $S\in\mathbb{S}_{++}^r$ and $r<n$. It is easy to verify that $\bm{E}$ is not surjective on $\mathbb{D}^n$ since the identity matrix is not in its range, hence $\phi(\hat{\Omega})=0$.

With the assist of $\phi(\hat{\Omega})$, we have the following lemma, which simplifies the representation of set ${\mathcal{D}}_{\hat{\Omega}}$ and may refine the condition on the tight relaxation.
\begin{lemma}\label{lem:refined_analysis}
	Let $\hat{\Omega}\in\mathbb{S}_+^n$ with $\text{\rm rank}(\hat{\Omega})=r$. Then it holds
	$$\tilde{\mathcal{D}}_{\hat{\Omega}}:=\left\{{\Delta}\in\mathbb{D}_+^n\Big|~\|\Delta\|_F<\phi(\hat{\Omega})\sigma_r(\hat{\Omega})\right\}\subset{\mathcal{D}}_{\hat{\Omega}}.$$	
\end{lemma}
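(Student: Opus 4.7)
The plan is to explicitly construct, for any $\Delta\in\tilde{\mathcal{D}}_{\hat{\Omega}}$, a symmetric hollow matrix $\Lambda\in\mathbb{S}_0^n$ that certifies $\Delta\in\mathcal{D}_{\hat{\Omega}}$. The natural candidate is already encoded in the definition of $\phi(\hat{\Omega})$: take $X=\bm{E}^{\dagger}(\Delta)$ and set $\Lambda:=VXV^T-\Delta$.

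First I would dispose of the degenerate cases. If $\phi(\hat{\Omega})=0$, then the strict inequality $\|\Delta\|_F<\phi(\hat{\Omega})\sigma_r(\hat{\Omega})=0$ has no solutions in $\mathbb{D}_+^n$, so $\tilde{\mathcal{D}}_{\hat{\Omega}}=\emptyset$ and the inclusion holds vacuously. When $\phi(\hat{\Omega})>0$, the operator $\bm{E}$ is surjective by definition of $\phi$, and the Moore-Penrose pseudoinverse $\bm{E}^{\dagger}$ of the preceding display is well-defined. The case $\Delta=0$ is handled by $\Lambda=0$.

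For the main case $\Delta\neq 0$, set $X=\bm{E}^{\dagger}(\Delta)\in\mathbb{S}^{n-r}$ and $\Lambda=VXV^T-\Delta$. Because $\bm{E}\bm{E}^{\dagger}$ is the identity on $\mathbb{D}^n$, we have $\text{diag}(VXV^T)=\bm{E}(X)=\Delta$, so $\Lambda$ is symmetric and has zero diagonal, i.e., $\Lambda\in\mathbb{S}_0^n$. The first condition in the definition of $\mathcal{D}_{\hat{\Omega}}$ is then immediate from $\mathcal{R}(\Delta+\Lambda)=\mathcal{R}(VXV^T)\subset\mathcal{R}(V)=\mathcal{K}(\hat{\Omega})\perp\mathcal{R}(\hat{\Omega})$. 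For the norm bound, the isometry property $V^TV=I_{n-r}$ yields $\|VXV^T\|=\|X\|=\|\bm{E}^{\dagger}(\Delta)\|$, and the definition of $\phi(\hat{\Omega})$ as an infimum gives
$$\|\bm{E}^{\dagger}(\Delta)\|\leq \frac{\|\Delta\|_F}{\phi(\hat{\Omega})}<\sigma_r(\hat{\Omega}),$$
closing the argument.

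The hard part is essentially non-existent: the work amounts to recognizing that the pseudoinverse already built into $\phi$ delivers the right witness, and then unpacking the isometry $V$ to read off both the range and norm conditions. The one place where care is needed is the verification that $\Lambda$ lands in $\mathbb{S}_0^n$ — symmetry follows from $\bm{E}^{\dagger}$ taking values in $\mathbb{S}^{n-r}$, while the zero-diagonal condition is precisely the equation $\bm{E}(X)=\Delta$, which is solvable exactly because $\phi(\hat{\Omega})>0$ forces $\bm{E}$ to be surjective.
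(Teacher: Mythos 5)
Your proposal is correct and follows essentially the same route as the paper: both construct the witness $\Lambda=V\bm{E}^{\dagger}(\Delta)V^T-\Delta$, get the range condition from $\mathcal{R}(V)=\mathcal{K}(\hat{\Omega})$, and get the norm bound $\|\Delta+\Lambda\|=\|\bm{E}^{\dagger}(\Delta)\|\leq\|\Delta\|_F/\phi(\hat{\Omega})<\sigma_r(\hat{\Omega})$ from the definition of $\phi(\hat{\Omega})$. Your version merely spells out two steps the paper leaves implicit (that $\bm{E}\bm{E}^{\dagger}$ being the identity forces $\Lambda\in\mathbb{S}_0^n$, and that the isometry $V$ preserves the spectral norm), which is fine.
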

\begin{proof}
	Suppose $\bm{E}$ is surjective; otherwise, the statement is trivially true.
	As a result, it follows that for all matrices $\tilde{\Delta}\in\tilde{\mathcal{D}}_{\hat{\Omega}}$, there exists a $\Lambda\in\mathbb{S}^n_0$ such that
	$$V\bm{E}^{\dagger}(\tilde{\Delta})V^T=\Lambda+\tilde{\Delta}.$$
	It is clear that $\mathcal{R}(\Lambda+\tilde{\Delta})\perp\mathcal{R}(\hat{\Omega})$ since $$\mathcal{R}(\Lambda+\tilde{\Delta})\subset\mathcal{R}(V)=\mathcal{K}(\hat{\Omega})$$ from the above equality.
	On the other hand, note $\|\tilde{\Delta}\|_F<\phi(\hat{\Omega})\sigma_r(\hat{\Omega})$, then we have
	\begin{align*}
	\|\Lambda &+\tilde{\Delta}\|=\|\bm{E}^{\dagger}(\tilde{\Delta})\|\leq \sup_{{\Delta}\in\mathbb{D}_+^n\setminus\{0\}}\frac{\|\bm{E}^{\dagger}(\Delta)\|}{\|\Delta\|_F} \|\tilde{\Delta}\|_F\\
	&<\sup_{{\Delta}\in\mathbb{D}_+^n\setminus\{0\}}\frac{\|\bm{E}^{\dagger}(\Delta)\|}{\|\Delta\|_F} \phi(\hat{\Omega})\sigma_r(\hat{\Omega})=\sigma_r(\hat{\Omega}).
	\end{align*}
	Therefore, we can conclude that $\|\Lambda+\tilde{\Delta}\|<\sigma_r(\hat{\Omega})$ as well as $\mathcal{R}(\Lambda+\tilde{\Delta})\perp\mathcal{R}(\hat{\Omega})$, which shows $\tilde{\Delta}\in{\mathcal{D}}_{\hat{\Omega}}$.
\end{proof}
The set $\tilde{\mathcal{D}}_{\hat{\Omega}}$ is essentially the intersection of the nonnegative orthant and the open ball centred at $0$ with radius $\phi(\hat{\Omega})\sigma_r(\hat{\Omega})$ measured by the Euclidean distance in $\mathbb{R}^{n}$.
By Lemma~\ref{lem:refined_analysis}, we obtain the following corollary to Theorem~\ref{thm:tight_relax} immediately.
\begin{corollary}
Let $\phi(\hat{\Omega})>0$.	Then for all $\Sigma=\hat{\Omega}+\Delta>0$ with $\Delta\in\tilde{\mathcal{D}}_{\hat{\Omega}}$, \textcolor{black}{the solutions to (\ref{eq:relaxed_FK}) solve (\ref{eq:reformulated_FK})} with optimum attained on $\Omega^\star=\hat{\Omega}$.
\end{corollary}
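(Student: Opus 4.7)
The plan is to observe that this corollary follows almost immediately by chaining the two preceding results: Lemma~\ref{lem:refined_analysis} shows $\tilde{\mathcal{D}}_{\hat{\Omega}}\subset\mathcal{D}_{\hat{\Omega}}$, while Theorem~\ref{thm:tight_relax} establishes the tight relaxation under a hypothesis phrased in terms of $\mathcal{S}_{\hat{\Omega}}$. So essentially all I need to do is verify that the hypotheses of the corollary imply $\Sigma\in\mathcal{S}_{\hat{\Omega}}$, and then invoke Theorem~\ref{thm:tight_relax}.

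More concretely, first I would fix $\Sigma=\hat{\Omega}+\Delta>0$ with $\Delta\in\tilde{\mathcal{D}}_{\hat{\Omega}}$. By Lemma~\ref{lem:refined_analysis}, the containment $\tilde{\mathcal{D}}_{\hat{\Omega}}\subset\mathcal{D}_{\hat{\Omega}}$ gives $\Delta\in\mathcal{D}_{\hat{\Omega}}$. Combined with the positive definiteness assumption $\Sigma>0$, the definition (\ref{eq:perturbed_Sigma}) of $\mathcal{S}_{\hat{\Omega}}$ yields $\Sigma\in\mathcal{S}_{\hat{\Omega}}$ directly. At this point the hypothesis of Theorem~\ref{thm:tight_relax} is satisfied (the rank assumption $\text{rank}(\hat{\Omega})=r<n$ is implicit since $\phi(\hat{\Omega})>0$ forces $\bm{E}$ to be surjective, which in turn requires $r<n$), so the theorem delivers the conclusion: every solution of (\ref{eq:relaxed_FK}) also solves (\ref{eq:reformulated_FK}), with the optimum attained at $\Omega^\star=\hat{\Omega}$.

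I do not expect any genuine obstacle here; the corollary is really a restatement of Theorem~\ref{thm:tight_relax} with the cleaner, geometrically transparent neighborhood $\tilde{\mathcal{D}}_{\hat{\Omega}}$ substituted for $\mathcal{D}_{\hat{\Omega}}$. The only mild subtlety is making sure the assumption $\phi(\hat{\Omega})>0$ is actually needed for $\tilde{\mathcal{D}}_{\hat{\Omega}}$ to be nontrivial (otherwise the open ball condition $\|\Delta\|_F<\phi(\hat{\Omega})\sigma_r(\hat{\Omega})=0$ would force $\Delta=0$, reducing the statement to Lemma~\ref{lem:given_low_rank}); this condition simultaneously implicitly enforces $r<n$, which is in turn required to apply Theorem~\ref{thm:tight_relax}. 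So the proof will be just a few lines: verify $\Delta\in\mathcal{D}_{\hat{\Omega}}$ via Lemma~\ref{lem:refined_analysis}, verify $\Sigma\in\mathcal{S}_{\hat{\Omega}}$ via (\ref{eq:perturbed_Sigma}), and invoke Theorem~\ref{thm:tight_relax}.
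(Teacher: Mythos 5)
Your proposal is correct and matches the paper exactly: the paper gives no separate proof, stating that the corollary follows immediately from Lemma~\ref{lem:refined_analysis} (which gives $\tilde{\mathcal{D}}_{\hat{\Omega}}\subset\mathcal{D}_{\hat{\Omega}}$, hence $\Sigma\in\mathcal{S}_{\hat{\Omega}}$) combined with Theorem~\ref{thm:tight_relax}. Your added remark that $\phi(\hat{\Omega})>0$ implicitly forces $r<n$ is a correct and harmless extra check.
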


\subsection{Case Study}
The following example is a case study for the proposed algorithm and the analysis on the tight relaxation. Consider the following rank-$1$ matrix
\begin{align}\label{eq:sigh_casestudy}
\hat{\Omega}=\begin{bmatrix}
16& 8 & 4\\
8 & 4 & 2\\
4  & 2 & 1\\
\end{bmatrix}=\begin{bmatrix}
4 \\ 2 \\ 1
\end{bmatrix}\begin{bmatrix}
4 & 2 & 1
\end{bmatrix}.
\end{align}
We start with the characterization of the set $\mathcal{D}_{\hat{\Omega}}$.
Through simple computation, we can represent the kernel space of $\hat{\Omega}=\mathcal{R}\left(V\right)$ where
$$V:=\begin{bmatrix}
0.49 & 0\\
-0.78 & -0.45\\
-0.39 & 0.89\\
\end{bmatrix}$$ is an isometry.
It is clear that for every $$\Delta=\text{diag}^*\left(\begin{bmatrix}d_1&d_2&d_3\end{bmatrix}\right)\in \mathcal{D}_{\hat{\Omega}},$$ there exists a matrix $\Lambda\in\mathbb{S}_0^3$ such that
$$\mathcal{R}(\Delta+\Lambda)\subset\mathcal{K}(\hat{\Omega})~\text{and}~\|\Delta+\Lambda\|<\sigma_1(\hat{\Omega}).$$
It is not hard to obtain the following parametrization
\begin{multline*}
\Delta+\Lambda=VXV^T\\
=\begin{bmatrix}
0.49 & 0\\
-0.78 & -0.45\\
-0.39 & 0.89\\
\end{bmatrix}\begin{bmatrix}
a & \frac{b}{\sqrt{2}}\\
\frac{b}{\sqrt{2}} & c\\
\end{bmatrix}\begin{bmatrix}
0.49 & 0\\
-0.78 & -0.45\\
-0.39 & 0.89\\
\end{bmatrix}^T.
\end{multline*}
Equating the diagonal terms on both the sides, we have
$$\begin{bmatrix}
d_1 \\ d_2 \\ d_3
\end{bmatrix}=\begin{bmatrix}
0.24 & 0 & 0\\
0.61  & 0.49 & 0.20\\
0.15 & -0.49 & 0.80\\
\end{bmatrix}\begin{bmatrix}
a \\ b \\ c\\
\end{bmatrix}=:E\begin{bmatrix}
a \\ b \\ c\\
\end{bmatrix}. $$
Here, matrix $E$ is invertible, which means for all diagonal matrices $\Delta$, there exists a $\Lambda\in\mathbb{S}_0^3$ such that $$\mathcal{R}(\Delta+\Lambda)\subset\mathcal{K}(\hat{\Omega}),$$
which is equivalent to
$$\mathcal{R}(\Delta+\Lambda)\perp\mathcal{R}(\hat{\Omega}).$$
\begin{figure}
	\centering
	\includegraphics[width=.52\textwidth]{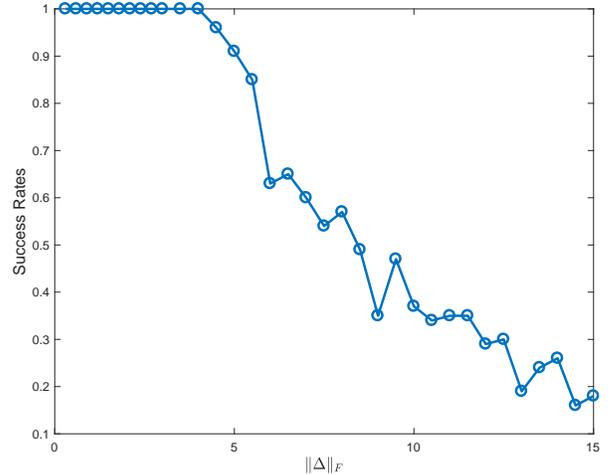}\\
	\caption{Success rates for Algorithm~1 to solve the Frisch-Kalman problem in (\ref{eq:FrischKalman}), where $\Sigma=\hat{\Omega}+\Delta>0$, $\hat{\Omega}$ is given in (\ref{eq:sigh_casestudy}), and $\Delta$ is generated according to (\ref{eq:generation_of_D}) for $T=100$ times with each level of $\|\Delta\|_F$.}
	\label{fig:succ_rate}
\end{figure}

Furthermore, we have the following inequalities
\begin{multline}\label{eq:case_study}
\|\Delta+\Lambda\|=\|VXV^T\|= \left\|X\right\| \leq \left\|X\right\|_F\\
=\left\|\begin{bmatrix}
a\\b\\c
\end{bmatrix}\right\|\leq \|E^{-1}\|\left\|\begin{bmatrix}
d_1 \\ d_2 \\ d_3
\end{bmatrix}\right\|=\|E^{-1}\|\|\Delta\|_F.
\end{multline}
As a result, as long as $\|\Delta\|_F<\|E^{-1}\|^{-1}{\sigma}_1(\hat{\Omega})=3.12$, it holds that
$\|\Delta+\Lambda\|<{\sigma}_1(\hat{\Omega})$. Due to the norm relaxation from $\|\cdot\|$ to $\|\cdot\|_F$ in (\ref{eq:case_study}), we know
 $$\phi(\hat{\Omega})\geq \|E^{-1}\|^{-1},$$
 and further
$$\{\Delta\in\mathbb{D}_+^3|~\|\Delta\|_F<3.12\} \subset\tilde{\mathcal{D}}_{\hat{\Omega}}\subset \mathcal{D}_{\hat{\Omega}}.$$
When the perturbation $\|\Delta\|_F$ is bounded by $3.12$, Algorithm~1 will solve the Frisch-Kalman problem for all $\Sigma=\hat{\Omega}+\Delta>0$ according to Theorem~\ref{thm:tight_relax}.

We perform repeated simulations based on different values of $\|\Delta\|_F$.  Specifically, fixing $\|\Delta\|_F>0$, we generate a diagonal matrices $\tilde{\Delta}$ satisfying
\begin{align}\label{eq:generation_of_D}
\tilde{\Delta}=\frac{\|\Delta\|_F}{\|{d}\|}\text{diag}^*({d}).
\end{align}
Here, vector $d\in\mathbb{R}^3$, and its elements $d_i$, $i=1,2,3$, are independent and identically distributed (i.i.d.), and uniformly distributed on $[0,1]$. Obviously, it holds that $\|\tilde{\Delta}\|_F=\|\Delta\|_F$, and we repeat the above procedures $T$ times for calculating success rates.

The success rates for Algorithm~1 to solve the Frisch-Kalman problem are shown in Fig.~\ref{fig:succ_rate}. When $\|\Delta\|_F<3.12$, we observe that the relaxations above (\ref{eq:relaxed_FK}) are indeed tight. When $\|\Delta\|_F$ increases, the chance when (\ref{eq:relaxed_FK}) solves the Frisch-Kalman problem decreases.

\section{Comparison with Existing Methods}\label{sec:compare_Heuristic}
Various heuristic methods have been investigated for solving rank minimization problems. In this section, we compare our proposed method with several mostly adopted existing methods on solving the Frisch-Kalman problem.
\subsection{Nuclear Norm Minimization}
In the context of factor analysis, nuclear norm (trace) minimization has been pursued as a suitable heuristic; see, for instance, \cite{shapiro1982rank,ning2015linear}. The nuclear norm of a matrix is defined as the sum of all its singular values. With this heuristic, the Frisch-Kalman problem is relaxed into
\begin{equation}\label{eq:nuclear_norm_heuristic}
\begin{aligned}
\min_{\Delta} \{\text{tr}(\Sigma-\Delta)|~{\Delta}\in\mathbb{D}_+^n,\Sigma\geq \Delta\geq 0\}.
\end{aligned}
\end{equation}
\textcolor{black}{One way to analyze the corresponding conditions on tight relaxation, i.e., when the solutions to (\ref{eq:nuclear_norm_heuristic}) solve the Frisch-Kalman problem, is via investigating the restricted isometry property (RIP) \cite{Recht2010} of an associated linear operator.} Define a linear operator $\bm{L}:~\mathbb{R}^{n\times n}\to\mathbb{R}^{n\times n}$ that projects a matrix $X$ onto its off-diagonal terms, i.e.,
$$\bm{L}=X\mapsto X-\text{diag}(X).$$
With this linear operator, we equivalently reformulate the Frisch-Kalman problem as follows. Given $\Sigma\in\mathbb{S}^n_{++}$, determine
\begin{align}\label{eq:RIP}
\min_{\Omega}\{\text{rank}(\Omega)|~\bm{L}(\Omega)=\bm{L}(\Sigma), \Sigma\geq \Omega\geq 0\}.
\end{align}
For every integer $r\in[1,n]$, define the $r$-restricted isometry
constant to be the smallest number $\alpha_r(\bm{L})$ such that
\begin{align}\label{eq:RIP_def}
\hspace{-8pt}(1-\alpha_r(\bm{L}))\|X\|_F \leq \|\bm{L}(X)\|_F \leq (1+\alpha_r(\bm{L}))\|X\|_F
\end{align}
holds for all matrices $X$ of rank at most $r$. Existing RIP conditions for the tight relaxation require that $\alpha_r(\cdot)<1$ for some $r\in[1,n]$; see, for instance, \cite[Theorems~3.2, 3.3]{Recht2010}. However, we can easily verify that $$1\geq \alpha_r(\bm{L})\geq\alpha_1(\bm{L})=1,~1\leq r\leq n,$$
by setting $X=I$ in (\ref{eq:RIP_def}).
Therefore,  the RIP conditions are not applicable to the Frisch-Kalman problem. Similar statements about the applicability of RIP conditions can be found in \cite{ning2015linear}.
\subsection{Low-Rank Inducing $r*$-norm}
A series of matrix norms, \textcolor{black}{called the $r*$-norms (or spectral $r$-support norms) \cite{mcdonald2016new,grussler2018low,grussler_pontus2018low}}, are defined by
\begin{align}\label{eq:lr_inducing_rstar}
\|M\|_{{l}_\infty,r*}:=\max_{\|X\|_{{l}_1,r}\leq 1} \langle X,M\rangle,
\end{align}
where $X,M\in\mathbb{R}^{m\times n}$, $r=1,2,\dots,\min\{m,n\}$, and $$\displaystyle \|X\|_{{l}_1,r}:=\sum_{k=1}^r\sigma_k(X)$$ is the Ky Fan $r$-norm. When $r=1$, $\|X\|_{{l}_1,1}$ reduces to the spectral norm and its dual norm $\|M\|_{{l}_\infty,1*}$ reduces to the nuclear norm. Therefore, the $r*$-norms include the well-known nuclear norm as a special case.
With these low-rank inducing norms, the Frisch-Kalman problem may be relaxed into
\begin{equation}\label{eq:rstar_heuristic}
\begin{aligned}
\min_{\Delta} \{\|\Sigma-\Delta\|_{{l}_\infty,r*}|~\Delta\in\mathbb{D}^n,\Sigma\geq \Delta\geq 0\}.
\end{aligned}
\end{equation}
Via similar developments in \cite{grussler_pontus2018low}, we can transform (\ref{eq:rstar_heuristic}) into the following SDP:
\begin{equation}\label{eq:rstar_heuristic_SDP}
\begin{aligned}
&\min_{W,\Delta,\gamma}\gamma\\
&\text{s.t.}~~W\in\mathbb{S}_+^n,~{\Delta}\in\mathbb{D}_+^n,~\Sigma-\Delta\in\mathbb{S}_+^n,\\
~&\begin{bmatrix}
    \gamma I-W & \Sigma-\Delta \\
    \Sigma-\Delta & I \\
  \end{bmatrix}\in\mathbb{S}_+^{2n},\\
~&\text{tr}(W)=\gamma(n-r).
\end{aligned}
\end{equation}
When applying it to the Frisch-Kalman problem, we search for the lowest-rank solution by sequentially solving (\ref{eq:rstar_heuristic_SDP}) with $r=1,2,\dots,n$.
\subsection{Log-Det Heuristic}
The logarithm of the determinant has been used as a smooth approximation for the rank function; see, for instance, \cite{fazel2003log}. For $X\in\mathbb{S}_{+}^n$, the function $\log\det(X+\delta I)$, where $\delta>0$, is used as a smooth surrogate for $\text{rank}(X)$. Since $\log\det(X+\delta I)$ is actually non-convex in $X$, local minimization methods are proposed in \cite{fazel2003log} by solving trace minimization problems iteratively. In this case, the Frisch-Kalman problem is approximately solved via the following iterations:
\begin{equation*}
\begin{aligned}
&\Delta_0=0,~\delta>0,W_k=(\Sigma-\Delta_k+\delta I)^{-1},\\
&\Delta_{k+1}=\argmin_{\Delta} \{\text{tr}(W_k(\Sigma-\Delta))|~\Delta\in\mathbb{D}^n,\Sigma\geq \Delta\geq 0\}.
\end{aligned}
\end{equation*}

\subsection{Simulation Result}
\begin{figure}
	\centering
	\includegraphics[width=.52\textwidth]{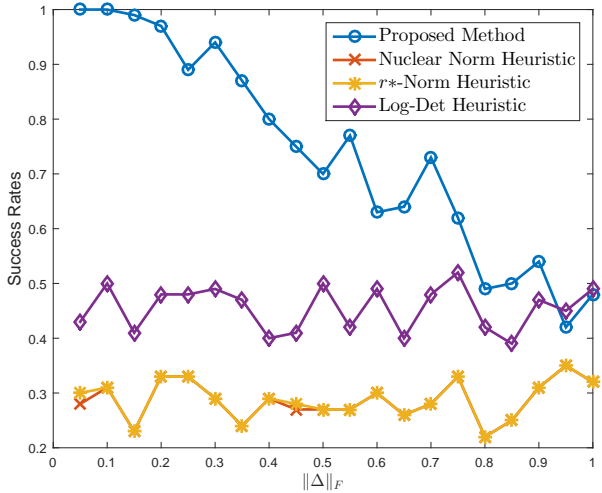}\\
	\caption{Success rates for the proposed, the nuclear norm, $r*$-norms and log-det heuristics to solve the Frisch-Kalman problem in (\ref{eq:FrischKalman}), respectively, where $\Sigma$ is randomly generated as described in the context with parameters $n=10$, $r=5$, and $T=100$. }
	\label{fig:succ_rate_nnh_r3}
\end{figure}

We compare the proposed algorithm with the existing methods, including the nuclear norm, $r*$-norms and log-det heuristics, based on randomly generated data for both $\hat{\Omega}$ and $\Delta$. The detailed randomization is given by the following steps.
\textcolor{black}{\begin{enumerate}
	\item Generate matrix $X\in\mathbb{R}^{r\times n}$ with $[X]_{ij}$ being standard i.i.d. Gaussian random variables, i.e., $[X]_{ij}\sim\mathbb{N}(0,1)$. Compute $\hat{\Omega}=X^T X$.
	\item Generate $\tilde{\Delta}$ with prescribed norm $\|\tilde{\Delta}\|_F=\|\Delta\|_F$ according to (\ref{eq:generation_of_D}) such that $\Sigma=\hat{\Omega}+\tilde{\Delta}>0$.
\end{enumerate}}
Given randomly generated matrices $\Sigma\in\mathbb{S}^n_{++}$, we check whether the heuristic methods will solve the Frisch-Kalman problem. For each level of $\|\Delta\|_F$, we repeat the the above procedures for $T$ times, and count for the success rates. \textcolor{black}{Here, the ``success rate'' refers to the percentage of experiments in which the recovered rank $r^\star$ satisfies that $r^\star \leq r$.}

It can be seen from Fig.~\ref{fig:succ_rate_nnh_r3} that the success rates
for the proposed method highly depend on the ``size of noise'', namely, the value $\|\Delta\|_F$, while those of the other heuristics do not. When $\|\Delta\|_F$ is close to zero, the success rate of Algorithm~1 approaches one. Practically, we may consider a suitable combination of all these heuristics.

\section{Conclusion and Future Work}\label{sec:conclusion}
A heuristic convex method is proposed for the century-old Frisch-Kalman problem. Both analytical and simulation results show that the method is accurate under the condition that the noise components are relatively small compared with the underlying data.

For future research, the proposed method may be improved via, for example, the combination with other heuristics, preprocessing on the observed data to remove possible outliers and so on. Another direction is to apply the method or the underlying ideas to solve more general rank minimization problems, such as the low-rank matrix completion, the data compression and so on.

\bibliographystyle{IEEEtran}

\bibliography{cdc2019}

\begin{thebibliography}{10}
\providecommand{\url}[1]{#1}
\csname url@samestyle\endcsname
\providecommand{\newblock}{\relax}
\providecommand{\bibinfo}[2]{#2}
\providecommand{\BIBentrySTDinterwordspacing}{\spaceskip=0pt\relax}
\providecommand{\BIBentryALTinterwordstretchfactor}{4}
\providecommand{\BIBentryALTinterwordspacing}{\spaceskip=\fontdimen2\font plus
\BIBentryALTinterwordstretchfactor\fontdimen3\font minus
  \fontdimen4\font\relax}
\providecommand{\BIBforeignlanguage}[2]{{%
\expandafter\ifx\csname l@#1\endcsname\relax
\typeout{** WARNING: IEEEtran.bst: No hyphenation pattern has been}%
\typeout{** loaded for the language `#1'. Using the pattern for}%
\typeout{** the default language instead.}%
\else
\language=\csname l@#1\endcsname
\fi
#2}}
\providecommand{\BIBdecl}{\relax}
\BIBdecl

\bibitem{frisch1934statistical}
R.~Frisch, \emph{Statistical Confluence Analysis by Means of Complete
  Regression Systems}.\hskip 1em plus 0.5em minus 0.4em\relax Universitetets
  {\O}konomiske Instituut, 1934, vol.~5.

\bibitem{kalman1982system}
R.~E. Kalman, ``System identification from noisy data,'' in \emph{Dynamical
  Systems II}, A.~Bednarek and L.~Cesari, Eds.\hskip 1em plus 0.5em minus
  0.4em\relax NY: Academic Press, 1982, vol. 210, pp. 135--164.

\bibitem{Kalman1985RankProblem}
------, ``Identification of noisy systems,'' \emph{Russian Mathematical
  Surveys}, vol.~40, no.~4, p.~25, 1985.

\bibitem{spearman1904}
C.~Spearman, ``General intelligence, objectively determined and measured,''
  \emph{American Journal of Psychology}, vol.~15, pp. 201--293, 1904.

\bibitem{ledermann1940}
W.~Ledermann, ``On a problem concerning matrices with variable diagonal
  elements,'' \emph{Proceedings of the Royal Society of Edinburgh}, vol.~60,
  no.~1, pp. 1--17, 1940.

\bibitem{shapiro1982rank}
A.~Shapiro, ``Rank-reducibility of a symmetric matrix and sampling theory of
  minimum trace factor analysis,'' \emph{Psychometrika}, vol.~47, no.~2, pp.
  187--199, 1982.

\bibitem{anderson1987dynamic}
B.~D. Anderson and M.~Deistler, ``Dynamic errors-in-variables systems with
  three variables,'' \emph{Automatica}, vol.~23, no.~5, pp. 611--616, 1987.

\bibitem{soderstrom2007errors}
T.~S{\"o}derstr{\"o}m, ``Errors-in-variables methods in system
  identification,'' \emph{Automatica}, vol.~43, no.~6, pp. 939--958, 2007.

\bibitem{ning2015linear}
L.~Ning, T.~T. Georgiou, A.~Tannenbaum, and S.~P. Boyd, ``Linear models based
  on noisy data and the frisch scheme,'' \emph{SIAM Review}, vol.~57, no.~2,
  pp. 167--197, 2015.

\bibitem{comrey1992first}
A.~L. Comrey and H.~B. Lee, \emph{A First Course in Factor Analysis}.\hskip 1em
  plus 0.5em minus 0.4em\relax NY: Psychology Press, 1992.

\bibitem{reiersol1941}
O.~Reiers{\o}l, ``Confluence analysis by means of lag moments and other methods
  of confluence analysis,'' \emph{Econometrica}, vol.~9, no.~1, pp. 1--24,
  1941.

\bibitem{Candes2010MatrixCompletion}
E.~J. {Candes} and Y.~{Plan}, ``Matrix completion with noise,'' \emph{Proc.~of
  the IEEE}, vol.~98, no.~6, pp. 925--936, June 2010.

\bibitem{Recht2010}
B.~Recht, M.~Fazel, and P.~Parrilo, ``Guaranteed minimum-rank solutions of
  linear matrix equations via nuclear norm minimization,'' \emph{SIAM Review},
  vol.~52, no.~3, pp. 471--501, 2010.

\bibitem{jain2013low}
P.~Jain, P.~Netrapalli, and S.~Sanghavi, ``Low-rank matrix completion using
  alternating minimization,'' in \emph{Proc.~of the Forty-fifth Annual ACM
  Symposium on Theory of Computing}, 2013, pp. 665--674.

\bibitem{saunderson2012diagonal}
J.~Saunderson, V.~Chandrasekaran, P.~A. Parrilo, and A.~S. Willsky, ``Diagonal
  and low-rank matrix decompositions, correlation matrices, and ellipsoid
  fitting,'' \emph{SIAM J.~on Matrix Analysis and Appl.}, vol.~33, no.~4, pp.
  1395--1416, 2012.

\bibitem{grussler2018low}
C.~Grussler, A.~Rantzer, and P.~Giselsson, ``Low-rank optimization with convex
  constraints,'' \emph{IEEE Trans.~Automat.~Contr.}, vol.~63, no.~11, pp.
  4000--4007, 2018.

\bibitem{grussler_pontus2018low}
C.~Grussler and P.~Giselsson, ``Low-rank inducing norms with optimality
  interpretations,'' \emph{SIAM J.~on Optimization}, vol.~28, no.~4, pp.
  3057--3078, 2018.

\bibitem{mcdonald2016new}
A.~M. McDonald, M.~Pontil, and D.~Stamos, ``New perspectives on k-support and
  cluster norms,'' \emph{The Journal of Machine Learning Research}, vol.~17,
  no.~1, pp. 5376--5413, 2016.

\bibitem{fazel2003log}
M.~Fazel, H.~Hindi, and S.~P. Boyd, ``Log-det heuristic for matrix rank
  minimization with applications to hankel and euclidean distance matrices,''
  in \emph{Proc.~of the 2003 American Contr.~Conference (ACC)}, vol.~3, 2003,
  pp. 2156--2162.

\bibitem{boyd2004convex}
S.~Boyd and L.~Vandenberghe, \emph{Convex Optimization}.\hskip 1em plus 0.5em
  minus 0.4em\relax London: Cambridge University Press, 2004.

\bibitem{Andrew2011PopulationCensus}
A.~Tatem and C.~Linard, ``Population mapping of poor countries,''
  \emph{Nature}, vol. 474, pp. 36--36, 2011.

\bibitem{tse2005fundamentals}
D.~Tse and P.~Viswanath, \emph{Fundamentals of Wireless Communication}.\hskip
  1em plus 0.5em minus 0.4em\relax London: Cambridge University Press, 2005.

\bibitem{Hansen14288}
J.~Hansen, M.~Sato, R.~Ruedy, K.~Lo, D.~W. Lea, and M.~Medina-Elizade, ``Global
  temperature change,'' \emph{Proc.~of the National Academy of Sciences}, vol.
  103, no.~39, pp. 14\,288--14\,293, 2006.

\bibitem{stewart1990matrix}
G.~W. Stewart, \emph{Matrix Perturbation Theory}.\hskip 1em plus 0.5em minus
  0.4em\relax San Diego, CA: Academic Press, 1990.

\end{thebibliography}
\end{document}